\setlist[enumerate]{label=({\arabic*})}
\newtheorem{defn}{Definition}[subsection]
\newtheorem{thm}[defn]{Theorem}
\newtheorem{rmk}[defn]{Remark}
\newtheorem{prop}[defn]{Proposition}
\newtheorem{cor}[defn]{Corollary}
\newenvironment{manualtheorem}[1]{%
  \manualtheoreminner
}{\endmanualtheoreminner}
\newenvironment{manualcor}[1]{%
  \manualcorinner
}{\endmanualcorinner}
\newcommand{\N}{\mathbb{N}}
\newcommand{\Z}{\mathbb{Z}}
\newcommand{\B}{{\mathcal B}}
\newcommand{\D}{{\mathbb D}}
\DeclareMathOperator*\lowlim{\underline{lim}}
\DeclareMathOperator*\uplim{\overline{lim}}
\begin{document}
\title{Expansivity and strong structural stability for composition operators on $L^p$ spaces} 

\author{Martina Maiuriello}

\newcommand{\Addresses}{{
  \bigskip
  \footnotesize

  M.~Maiuriello,\\
  \textsc{Dipartimento di Matematica e Fisica,\\ Universit\`a degli Studi della Campania ``Luigi Vanvitelli",\\
  Viale Lincoln n. 5, 81100 Caserta, Italy}\\
  \textit{E-mail address: \em martina.maiuriello@unicampania.it}

}}

\date{}
\maketitle

\begin{abstract}
In this note we investigate the two notions of expansivity and strong structural stability for composition operators on $L^p$ spaces, $1\leq p < \infty$. Necessary and sufficient conditions for such operators to be expansive are provided, both in the general and the dissipative case. We also show that, in the dissipative setting, the shadowing property implies the strong structural stability and we prove that these two notions are equivalent under the extra hypothesis of positive expansivity.  
\end{abstract}
\let\thefootnote\relax\footnote{\vspace{-0.3cm}\\ {\em 2010 MSC:} Primary: 47B33, 37C20; Secondary: 37B05, 37B65.\\
{\em Keywords:} $L^p$ spaces, composition operators, expansivity, shadowing property, structural stability.\\
\vspace{-0.2cm} \\
This research has been partially supported by the INdAM group GNAMPA ``Gruppo Nazionale per l’Analisi Matematica, la Probabilità e le loro Applicazioni'' and the project Vain-Hopes within the program VALERE: VAnviteLli pEr la RicErca; and partially accomplished within the UMI group TAA ``Approximation Theory and Applications''. }

\section{Introduction}

\indent The notion of structural stability, which comes from the work of Andronov and Pontrjagin \cite{AP}, is one of the fundamentals in the theory of linear dynamical systems. In recent years, many questions in hyperbolic dynamics, concerning relations between structural stability, expansivity, shadowing property and hyperbolicity, have been deeply analyzed and, sometimes, completely answered. Taking a glance at some key results in this field, one may observe that it is proved by Abdenur and Díaz, in \cite{AbdenurDiaz2007}, that, for $C^1$ diffeomorphisms on closed manifolds,  in certain contexts, the shadowing property implies hyperbolicity, and therefore structural stability. Pilyugin and Tikhomirov \cite{PilyuginTikhomirov2010} showed that, for $C^1$ diffeomorphisms of closed smooth manifolds, structural stability is equivalent to Lipschitz shadowing property. In \cite{BernardesMessaoudi}, Bernardes and Messaoudi  proved that all generalized hyperbolic operators are structurally stable.
The same authors, in \cite{BM}, showed, among other results, that hyperbolicity is equivalent to expansivity plus shadowing and that hyperbolicity implies expansivity, shadowing property and strong structural stability while, in general, each of the converses is false. \\
As it often happens in linear dynamics, many researchers have widely analyzed the above mentioned notions in the context of a special class of operators, the {\em weighted shifts}, and this is due to their versatility in the construction of examples in linear dynamics, in operator theory and its applications. Therefore, in the last decades, many dynamical properties have been completely analyzed and characterized for such operators \cite{Bayart2021, BCDMP,  BM, DAnielloDarjiMaiuriello}, sometimes even before the property in question was completely understood  in more general contexts. In particular, in \cite{BCDMP} and \cite{Bayart2021}, the authors contribute to this line of research by providing characterizations of expansive and strongly structurally stable weighted shifts, respectively. It turns out from the literature that weighted shifts represent a good model for understanding the dynamics of a natural class of operators: the {\em composition operators} on $L^p$ spaces, $1 \leq p < \infty$. 
Although some dynamical properties have been fully understood for composition operators \cite{BADP, BDP}, unfortunately, many other notions require the extra hypotheses of dissipativity and bounded distortion in order to be characterized for such operators: this is the case, among others, of generalized hyperbolicity and shadowing property  \cite{DAnielloDarjiMaiuriello, Darjipires}. More is true: recently, in \cite{DAnielloDarjiMaiuriello2}, focusing on chaotic properties as well as hyperbolic properties (such as shadowing, expansivity and generalized hyperbolicity), the authors established that dissipative composition operators with bounded distortion are {\em shifts-like operators}, in the sense that they behave similarly to weighted shifts. On the other hand, up to now, no characterization of structural stability and strong structural stability is known for this large class of operators. \\

\vspace{-0.2cm}
Motivated by these results, in this note, we provide a characterization of various types of expansivity for composition operators, both in the general and dissipative context. Moreover, we start the investigation of strong structural stability for these operators. \\
The note is organized as follows. In Section 2, the notation is fixed and preliminary definitions and background results are recalled. In Section 3, a characterization of expansive composition operators is stated and proved. Section 4 is  devoted to strongly structurally stable composition operators. It is there proved that, in the dissipative setting, the shadowing property implies the strong structural stability and that these two notions are equivalent under the extra hypothesis of positive expansivity. Anyway, the theory concerning structural stability and strong structural stability, for these class of operators, is far from being complete.

\section{Preliminary Definitions and Results}
\indent Throughout the paper, ${\mathbb N}$ denotes the set of all positive integers and ${\mathbb N}_0={\mathbb N}\cup \{0\};$  ${\mathbb D}$ and ${\mathbb T}$ are, respectively, the open unit disk and the unit circle in the complex plane ${\mathbb C}$. The space $X$ is always assumed to be a complex Banach space, in which the {\em unit sphere} is denoted by $S_X=\{x \in X\, : \, \Vert x \Vert =1 \}$. In the sequel, unless otherwise stated, $T$ denotes an invertible bounded linear operator from $X$ to itself and, as usual,  $\sigma(T)$ represents its {\em spectrum}. Moreover,  $C_{b}(X)$ denotes the Banach space of all bounded continuous maps $\phi : X  \rightarrow X$ endowed with the supremum norm ${\Vert \cdot \Vert}_{\infty}$.  \\

\subsubsection*{Expansivity, Shadowing and Hyperbolicity}

\begin{defn}
 The operator $T$ 
\begin{itemize}
    \item is  {\em (positively) expansive} if for each $x$ with $\Vert x \Vert =1$, there exists $n \in \mathbb Z$ $(n \in \mathbb N)$ such that $\Vert T^n x\Vert \geq 2$;
    \item  is  {\em (positively) uniformly expansive} if there exists $n \in \mathbb N$ such that, for every $x \in X$ with $\Vert x \Vert =1$,  $\Vert T^nx \Vert \geq 2$  or  $\Vert T^{-n}x \Vert \geq 2$ {\em{(}}for every $x \in X$ with $\Vert x \Vert =1$,  $\Vert T^nx \Vert \geq 2${\em{)}};
    \item has the {\em shadowing property} if, for each $\epsilon >0$, there exists $\delta >0$ such that every sequence  $\{x_n\}_{n \in 
 {\mathbb Z}}$  with \[ \Vert Tx_n - x_{n+1} \Vert \leq \delta, \text{ for all $n \in \mathbb Z$,}\] called $\delta -$pseudotrajectory of $T$,  is $\epsilon -$shadowed by a real trajectory of $T$, that is, there exists $x \in X$ such that \[ \Vert T^n x-x_n \Vert < \epsilon, \text{ for all $n \in  {\mathbb Z}$;}\]
 \item is {\em hyperbolic} if $\sigma (T) \cap {\mathbb T}= \emptyset$;
 \item is {\em generalized hyperbolic} if $X=M\oplus N$, where $M$ and $N$ are closed subspaces of $X$ with $T(M)\subseteq M$ and $T^{-1}(N) \subseteq N$,  and $\sigma(T_{|_M}) \subset \D$ and $\sigma(T^{-1}_{|_N}) \subset \D$.
   \end{itemize}
\end{defn}

We recall that in the above definition, in the cases of positive expansivity and positive uniform expansivity, the operator $T$ does not need to be invertible. Relations between these notions have been widely investigated in the literature (see, for instance, \cite{BCDMP, BernardesMessaoudi, BM, CGP, EH, E, O}, and references therein). In particular, it is known that hyperbolicity is equivalent to expansivity plus shadowing property and that hyperbolicity implies shadowing property and uniform expansivity, but the converse is false in general \cite{BM}.

\subsubsection*{Structural Stability and Strong Structural Stability}

There are several versions of structural stability in the literature: here it is considered the following one, originally given by Pugh in \cite{PU}. 

\begin{defn}
An invertible operator $T$ on $X$ is said to be {\em structurally stable} if there exists $\epsilon >0$ such that $T + \phi$ is topologically conjugate to $T$ whenever $\phi \in C_{b}(X)$ is a Lipschitz map with ${\Vert \phi \Vert}_{\infty}\leq \epsilon$ and $Lip(\phi) \leq \epsilon$. 
\end{defn}

The following stronger notion, introduced in  \cite{R}, is obtained by requiring extra properties on the conjugation between $T$ and $T + \phi$.

\begin{defn}
An invertible operator $T$ on  $X$ is said to be {\em strongly structurally stable}  if for every $\gamma >0$ there exists $\epsilon >0$ such that the following property holds: for any Lipschitz map $\phi \in C_{b}(X)$ with  ${\Vert \phi \Vert}_{\infty}\leq \epsilon$ and $Lip(\phi) \leq \epsilon$, there is a homeomorphism $h : X \rightarrow X$  such that $h \circ T = (T + \phi) \circ h$ and ${\Vert h - I \Vert}_{\infty} \leq \gamma$, namely the homeomorphism $h$ conjugating $T$ and $T + \phi$ is close to the identity operator. 
\end{defn}

The following result holds.

\begin{thm} \cite[Theorem 1]{BernardesMessaoudi} \label{genhyp}
Every generalized hyperbolic operator on a Banach space is strongly structurally stable.
\end{thm}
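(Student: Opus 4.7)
My plan is to construct the conjugating homeomorphism $h$ as $h = I + u$ for some $u \in C_b(X)$, obtained as a fixed point of a contraction mapping. First, using the generalized hyperbolic decomposition $X = M \oplus N$, I would renorm $X$ equivalently so that $a := \|T|_M\| < 1$ and $b := \|T^{-1}|_N\| < 1$; this is standard since both relevant spectral radii lie in the open unit disk. Denote by $P$ and $Q$ the associated bounded projections onto $M$ and $N$.

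The conjugacy equation $h \circ T = (T + \phi) \circ h$ is equivalent to
\[
u \circ T - T \circ u \;=\; \phi \circ (I + u),
\]
and the heart of the argument is to prove that the bounded linear operator $L : C_b(X) \to C_b(X)$ defined by $L(v) = v \circ T - T \circ v$ is a topological isomorphism, with a bound on $\|L^{-1}\|$ depending only on $a$, $b$, $\|P\|$, and $\|T\|$. To construct $L^{-1}$, I would split each $v \in C_b(X)$ as $Pv + Qv$ and iteratively unwind the equation $Lu = v$ using that $T$ contracts on $M$ and $T^{-1}$ contracts on $N$. Because the projections do not commute with $T$ --- only the one-sided invariances $T(M) \subseteq M$ and $T^{-1}(N) \subseteq N$ hold --- the $M$- and $N$-equations do not fully decouple, and one gets a coupled linear system with cross terms involving $PTQ$ and $QT^{-1}P$; since these cross terms are damped by factors $a^n$ and $b^n$ after $n$ iterations, a Neumann-series argument resolves the coupling and yields an explicit bounded inverse.

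With $L^{-1}$ in hand, the conjugacy equation becomes the fixed point problem
\[
u = F(u) \;:=\; L^{-1}\bigl( \phi \circ (I + u) \bigr).
\]
A direct estimate gives $\|F(u) - F(v)\|_\infty \leq \|L^{-1}\| \cdot Lip(\phi) \cdot \|u - v\|_\infty$ and $\|F(0)\|_\infty \leq \|L^{-1}\| \cdot \|\phi\|_\infty$, so, choosing $\epsilon$ small enough in terms of $\gamma$ and $\|L^{-1}\|$, the map $F$ sends the closed ball of radius $\gamma$ in $C_b(X)$ into itself and is a strict contraction there. The Banach fixed point theorem then produces a unique $u^* \in C_b(X)$ with $\|u^*\|_\infty \leq \gamma$, and $h := I + u^*$ is a continuous map satisfying $h \circ T = (T + \phi) \circ h$ and $\|h - I\|_\infty \leq \gamma$.

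To upgrade $h$ to a homeomorphism, I would run a symmetric construction producing $g = I + u' \in C_b(X)$ with $g \circ (T + \phi) = T \circ g$; the equation for $u'$ reads $u' \circ (T + \phi) - T \circ u' = -\phi$, which is a nonlinear perturbation of $Lu' = -\phi$ whose nonlinear correction is controlled by $Lip(u') \cdot \|\phi\|_\infty$ and can be absorbed into the same contraction argument. Then $g \circ h$ has the form $I + w$ with $w \in C_b(X)$ and commutes with $T$, so $L(w) = 0$; by invertibility of $L$, $w = 0$, hence $g \circ h = I$, and analogously $h \circ g = I$. The main obstacle is the construction and uniform control of $L^{-1}$: the non-commutativity of $P$ and $Q$ with $T$ obstructs a clean decoupling of the $M$- and $N$-equations, and the Neumann-series argument must carefully track the geometric decay of the cross terms.
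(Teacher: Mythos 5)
The paper does not prove this statement at all: it is quoted verbatim from \cite[Theorem 1]{BernardesMessaoudi}. Measured against the known proof, your proposal has a genuine gap, and it sits exactly at what you call the ``heart of the argument'': the claim that $L(v)=v\circ T-T\circ v$ is a topological isomorphism of $C_b(X)$ when $T$ is generalized hyperbolic. This is false. The cross terms you propose to absorb by a Neumann series are not damped: they involve forward powers of $T$ applied to vectors of $N$ (and backward powers on vectors of $M$), directions in which no contraction is available and the norms grow like $\Vert T\Vert^{n}$. Worse, $L$ can fail to be injective and to be surjective. Concretely, take the standard generalized hyperbolic, non-hyperbolic bilateral weighted shift on $\ell^p(\Z)$ with $w_n=\tfrac12$ for $n\le 0$ and $w_n=2$ for $n\ge 1$, with $M$ and $N$ the closed spans of $\{e_n:n\le 0\}$ and $\{e_n:n\ge 1\}$. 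The vector $x$ with coordinates $x_n=2^{-|n|}$ lies in $\ell^p(\Z)$ and satisfies $Tx=x$, so the constant map $v\equiv x$ is a nonzero element of $\ker L$; and evaluating $Lv=w$ at the fixed point $0$ shows that surjectivity of $L$ forces surjectivity of $I-T$, which may also fail since the spectrum of a generalized hyperbolic operator can meet the unit circle. Indeed, invertibility of $L$ (equivalently of the associated operator on bounded sequences, by Mather-type arguments) essentially characterizes hyperbolicity, so your scheme is the classical proof for hyperbolic operators and cannot reach the generalized hyperbolic case. The same defect destroys your closing step: ``$g\circ h=I+w$, $L(w)=0$, hence $w=0$'' is unavailable, and the equation for $g$ is in any case governed by $v\mapsto v\circ(T+\phi)-T\circ v$, not by $L$.

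What does survive is the existence half. Generalized hyperbolicity yields a bounded \emph{right} inverse $R$ of $L$, namely
\[
(Rw)(x)\;=\;\sum_{j\ge 0} T^{j}P\,w\bigl(T^{-j-1}x\bigr)\;-\;\sum_{j\ge 1} T^{-j}Q\,w\bigl(T^{j-1}x\bigr),
\]
which is well defined precisely because only forward iterates on $M$ and backward iterates on $N$ occur; a direct computation gives $L(Rw)=w$. Running your contraction argument with $R$ in place of $L^{-1}$ does produce a continuous $h=I+u$ with $h\circ T=(T+\phi)\circ h$ and $\Vert h-I\Vert_\infty\le\gamma$. But upgrading this $h$ to a homeomorphism without injectivity of $L$ (equivalently, without the uniqueness that expansivity would provide -- and the example above is generalized hyperbolic yet not expansive) is precisely the hard content of Bernardes--Messaoudi's theorem and requires a genuinely different argument; your proposal is missing that part entirely.
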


\noindent
Relations between expansivity and structural stability are investigated in \cite{BM}.

\begin{thm} \cite[Theorem 6]{BM} \label{SS}
Let $T$ be an invertible operator on $X$. Assume that $T$ is structurally stable. Then, the following hold:
\begin{enumerate}
\item[(a)] {If T is expansive, then T is uniformly expansive.}
\item[(b)] {If T is positively expansive, then T is hyperbolic.}
\end{enumerate}
\end{thm}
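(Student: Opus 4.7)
My approach in both parts is by contraposition: I assume $T$ is structurally stable and (positively) expansive, and I derive (positive) uniform expansivity by building appropriate small Lipschitz perturbations and exploiting the topological conjugacy they induce.

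For part (a), suppose toward contradiction that $T$ is not uniformly expansive. Then for every $n \in \N$ there exists $x_n \in S_X$ whose two-sided orbit segment $(T^k x_n)_{|k| \leq n}$ lies inside the open ball of radius $2$. The idea is to turn such a long bounded orbit segment into a topological obstruction that survives the conjugacy. Concretely, I would construct a bounded Lipschitz perturbation $\phi \in C_{b}(X)$, with ${\Vert \phi \Vert}_{\infty}$ and $\mathrm{Lip}(\phi)$ below the structural-stability threshold, engineered so that $T+\phi$ acquires a periodic orbit distinct from the image of $\{0\}$ under the conjugating homeomorphism $h$. Since $h$ maps periodic orbits bijectively to periodic orbits and preserves period, pulling such an orbit back through $h^{-1}$ produces a nonzero periodic point of $T$, namely a vector $u\neq 0$ with $T^k u = u$ for some $k \in \N$. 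Restricting $T$ to the finite-dimensional cyclic invariant subspace spanned by $u, Tu, \ldots, T^{k-1}u$ yields an eigenvalue of $T$ of modulus $1$; any corresponding unit eigenvector $v \in S_X$ then satisfies $\Vert T^n v \Vert = 1 < 2$ for every $n \in \Z$, contradicting the expansivity of $T$.

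For part (b), I would run the one-sided analogue: the failure of positive uniform expansivity supplies, for each $n$, a unit vector $x_n$ with $\Vert T^k x_n \Vert < 2$ for $0 \leq k \leq n$, and the same perturbation-plus-conjugacy strategy yields a nonzero forward periodic point of $T$, and hence a unit vector $v$ with $\Vert T^n v \Vert = 1$ for every $n \geq 0$, contradicting positive expansivity. Thus $T$ must be positively uniformly expansive: there is $n \in \N$ with $\Vert T^n x \Vert \geq 2 \Vert x \Vert$ for every $x \in X$. Consequently $T^n$ is bounded below by $2$, the inverse $T^{-n}$ satisfies $\Vert T^{-n} \Vert \leq 1/2$, the spectral radius $r(T^{-1})$ is strictly less than $1$, and so $\sigma(T) \subset \{z \in \mathbb{C} : |z| > 1\}$, whence $\sigma(T) \cap \mathbb{T} = \emptyset$ and $T$ is hyperbolic.

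The main obstacle is the construction of $\phi$. The homeomorphism $h$ supplied by basic (as opposed to \emph{strong}) structural stability need not be close to the identity and does not preserve any quantitative norm information, so the only invariants one can pull back through $h$ are genuinely topological -- most usefully, fixed points and periodic orbits. Engineering a bounded Lipschitz $\phi$, with both ${\Vert \phi \Vert}_\infty$ and $\mathrm{Lip}(\phi)$ arbitrarily small, using only the orbit-segment data $(T^k x_n)$, so that $T + \phi$ genuinely acquires an additional periodic orbit, requires a delicate local bump construction together with a careful fixed-point analysis of the equation $(T^k + \text{perturbation}) y = y$; the non-compactness of $S_X$ in infinite dimensions is precisely what makes this the nontrivial step.
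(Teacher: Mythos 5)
First, a point of reference: the paper does not prove this statement at all --- it is quoted from \cite[Theorem 6]{BM} --- so your attempt can only be measured against the original argument of Bernardes and Messaoudi, not against anything in this note.

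Your proposal is a strategy outline rather than a proof, and the step you yourself flag as ``the main obstacle'' is precisely the entire content of the theorem, so leaving it unconstructed is a genuine gap, not a technicality. Worse, the route you indicate for it would not work as sketched. To close up a long orbit segment $(T^kx)_{|k|\le n}$ into a periodic orbit you must move $T^{n}x$ onto $T^{-n}x$, a displacement of size comparable to the vectors themselves (up to $2+2\Vert T\Vert$); rescaling the segment by a small $\delta$ makes $\Vert\phi\Vert_\infty$ small, but a bump supported near $\delta T^{n}x$ then has Lipschitz constant of order $(2+2\Vert T\Vert)$ divided by the separation of the (rescaled) orbit points, and nothing bounds that separation from below in $S_X$, so the requirement $Lip(\phi)\le\epsilon$ is not met. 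A second, smaller gap: the statement that failure of (positive) uniform expansivity yields unit vectors whose \emph{whole} segment $(T^kx_n)_{|k|\le n}$ lies in the ball of radius $2$ is not the negation of the definition (which only controls $\Vert T^{\pm n}x_n\Vert$); it is true, but only via the spectral characterization of uniform expansivity ($T$ is uniformly expansive iff $\sigma_{ap}(T)\cap{\mathbb T}=\emptyset$, cf.\ \cite{EH,BCDMP}), i.e.\ via approximate eigenvectors, which you never invoke. Finally, insisting on an exactly periodic orbit for $T+\phi$ is more than one can arrange in general (the relevant unimodular approximate eigenvalue need not be a root of unity), and it is also more than is needed.

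For comparison, the actual argument runs as follows. If $T$ is expansive but not uniformly expansive, pick $\lambda\in\sigma_{ap}(T)\cap{\mathbb T}$ and a unit vector $x$ with $\Vert Tx-\lambda x\Vert$ as small as you wish; with $f\in X^*$ a norming functional for $x$ and $r$ the radial retraction of ${\mathbb C}$ onto $\overline{\D}$, set $\phi(z)=r(f(z))\,(\lambda x-Tx)$. Then $\Vert\phi\Vert_\infty\le\Vert Tx-\lambda x\Vert$ and $Lip(\phi)\le 2\Vert Tx-\lambda x\Vert$, both below the structural-stability threshold, while $(T+\phi)(\lambda^kx)=\lambda^{k+1}x$ for all $k$, so $T+\phi$ has the bounded orbit $\{\lambda^kx\}_{k\in\Z}$, contained in the circle $\{\mu x:\vert\mu\vert=1\}$. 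Choosing $x$ (or a unimodular multiple of it) different from $h(0)$, the conjugacy $h\circ T=(T+\phi)\circ h$ gives $u=h^{-1}(x)\neq 0$ whose $T$-orbit lies in the compact set $h^{-1}(\{\mu x:\vert\mu\vert=1\})$, hence is bounded, contradicting the characterization of expansivity $\sup_{n\in\Z}\Vert T^nu\Vert=\infty$ for $u\neq0$ (\cite[Proposition 19]{BCDMP}); no periodic orbit and no finite-dimensional eigenvector reduction are needed. Part (b) is the same one-sided argument with $\lambda\in\sigma_{ap}(T)$, $\vert\lambda\vert\le1$, yielding a nonzero vector with bounded forward orbit; your concluding step, that uniform positive expansivity forces $\Vert T^{-n}\Vert\le 1/2$, hence $r(T^{-1})<1$ and $\sigma(T)\cap{\mathbb T}=\emptyset$, is correct and is the unproblematic part of the proposal.
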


The basic relations between all the above mentioned notions are summarized in the following diagram, in which, in general, none of the implications can be reverted, as showed in  \cite{BCDMP, EH} and \cite[Theorem 9]{BM}.

\begin{figure}[H]
\centering
\begin{tikzcd}[sep=small,arrows=Rightarrow, scale cd=.9]
 &  & \text{Uniform expansivity} \arrow[r] &  \text{Expansivity} \\
& \text{Hyperbolicity}\arrow[ur] \arrow[rd] &  \\
&  &\text{Generalized hyperbolicity} \arrow[d] \arrow[r] &  \text{Shadowing property}\\
& & \text{Strong structural stability} \arrow[d] \\
&  & \text{Structural stability}
\end{tikzcd}
\end{figure}

\subsection{Background Results for Weighted Shifts}

Weighted shifts were introduced in \cite{Rolewicz}.

\begin{defn} 
Let $X=\ell^p(\Z)$, $1 \leq p < \infty$ or $X=c_0(\Z).$ Let  $w=\{w_n\}_{n \in \Z}$ be a bounded sequence of scalars, called {\em weight sequence}.  Then, the {\em bilateral weighted backward shift $B_w$ on $X$} is defined by \[B_w(\{x_n\}_{n \in \Z}) =\{w_{n+1}x_{n+1}\}_{n \in  \Z}.\] 
\end{defn}

\noindent The boundedness of $w=\{w_n\}_{n \in \Z}$ is a necessary and sufficient condition for $B_w$ to be a well-defined bounded operator on $X$. A bilateral $B_w$ is invertible if and only if  $\inf_{n \in \mathbb Z} \vert w_n \vert >0$. Many dynamical properties mentioned in Section 2  have been completely analyzed for such operators: characterizations of expansivity and shadowing are proved, respectively, in \cite[Theorem E]{BCDMP} and \cite[Theorem 18]{BM}; hyperbolic and generalized hyperbolic weighted shifts are characterized in \cite[Theorem 2.4.5]{DAnielloDarjiMaiuriello}. Recently, strong structural stability has also been investigated, as the following result shows.

\begin{thm} \cite[Theorem 1.2]{Bayart2021} \label{theoSSBW}
Let  $X=\ell^p({\mathbb Z})$ $(1 \leq p < \infty)$ or $X=c_0({\mathbb Z})$. Let $B_w$ be an invertible bilateral weighted backward shift. Then $B_w$ is strongly structurally stable if and only if one of the following conditions holds:
\begin{itemize}
\item[a)]{$\displaystyle{\lim_{n \rightarrow \infty} \left ( \sup _{k \in {\mathbb Z}} (\prod_{j=k}^{k+n}\vert w_{j}\vert)^{\frac{1}{n}} \right )<1;}$}
\item[b)]{$\displaystyle{\lim_{n \rightarrow \infty} \left ( \inf_{k \in {\mathbb Z}} (\prod_{j=k}^{k+n}\vert w_{j}\vert)^{\frac{1}{n}} \right )>1;}$}
\item[c)]{$\displaystyle{\lim_{n \rightarrow \infty} \left (\sup _{k \in {\mathbb N}} (\prod_{j=-k-n}^{-k}\vert w_{j}\vert)^{\frac{1}{n}}\right )<1}$ \ \ and \ \ $\displaystyle{\lim_{n \rightarrow \infty}\left (\inf_{k \in {\mathbb N}} (\prod_{j=k}^{k+n}\vert w_{j}\vert)^{\frac{1}{n}} \right )>1.}$  }
\end{itemize}
\end{thm}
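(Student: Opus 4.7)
The plan is to view Theorem \ref{theoSSBW} as asserting that, for a bilateral backward weighted shift, strong structural stability is equivalent to generalized hyperbolicity, with conditions (a), (b), (c) being the three forms the generalized hyperbolic splitting can take in terms of the weight sequence.

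For sufficiency ($\Leftarrow$), I would start from the elementary identity $\|B_w^n e_k\| = \prod_{j=k-n+1}^{k}|w_j|$ and its dual for $B_w^{-1}$. In case (a), the uniform geometric bound gives $\|B_w^n\|\to 0$ exponentially, so $\sigma(B_w)\subset \D$ and $B_w$ is hyperbolic with trivial decomposition $M=X$, $N=\{0\}$. Case (b) is dual, giving $\sigma(B_w)\subset \mathbb{C}\setminus\overline{\D}$ with $M=\{0\}$, $N=X$. In case (c) the canonical splitting $M=\overline{\mathrm{span}}\{e_k : k\leq 0\}$, $N=\overline{\mathrm{span}}\{e_k : k\geq 1\}$ satisfies $B_w(M)\subseteq M$ and $B_w^{-1}(N)\subseteq N$, and the two asymptotic estimates are exactly what is needed to yield $\sigma(B_w|_M)\subset \D$ and $\sigma(B_w^{-1}|_N)\subset \D$. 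In every case $B_w$ is generalized hyperbolic, and Theorem \ref{genhyp} immediately delivers strong structural stability.

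For necessity ($\Rightarrow$) I would argue by contrapositive: assume that none of (a), (b), (c) holds and derive that $B_w$ is not SSS. The negation of the three conditions produces, for every $\eta>0$ and every $N\in\N$, arbitrarily long blocks of indices $[k_m, k_m+n_m]$ with $n_m\geq N$ on which $(\prod_{j=k_m}^{k_m+n_m}|w_j|)^{1/n_m}\in(1-\eta,1+\eta)$, in a distribution across $\N$ and $-\N$ that precludes any hyperbolic-type splitting of $X$. On such blocks I would build a finitely supported Lipschitz perturbation $\phi$ of arbitrarily small $\|\phi\|_\infty$ and $\mathrm{Lip}(\phi)$ for which $B_w+\phi$ acquires a nonzero fixed (or periodic) point $y_\epsilon$ whose norm is bounded below by an absolute constant $c>0$. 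Since $0$ is the unique fixed point of $B_w$ (generically), any homeomorphism $h$ conjugating $B_w$ with $B_w+\phi$ must send $0$ to $y_\epsilon$, so $\|h-I\|_\infty\geq c$. Choosing $\gamma<c$ in the definition of SSS then produces the contradiction.

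The main obstacle is the explicit construction in the necessity step: one has to reconcile the smallness of $\|\phi\|_\infty$ and $\mathrm{Lip}(\phi)$ with a uniform lower bound on $\|y_\epsilon\|$, and to do so in each of the three failure modes of (a)--(c), which requires separating the behavior of the weights along $\N$ and along $-\N$. A cleaner alternative I would try first is to show that, for weighted shifts, SSS already forces the shadowing property (by turning small Lipschitz perturbations into pseudo-orbits) plus a form of expansivity via Theorem \ref{SS}; invoking the known characterization of shadowing weighted shifts from \cite[Theorem 18]{BM} would then mechanically produce the trichotomy (a)--(c) without an ad hoc perturbation argument.
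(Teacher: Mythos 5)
This theorem is quoted in the paper from \cite[Theorem 1.2]{Bayart2021} without proof, so there is no internal argument to compare against; your attempt can only be judged on its own terms. Your sufficiency direction is fine and is the expected route: a) and b) are exactly the weight conditions making $\sigma(B_w)\subset\D$, respectively $\sigma(B_w)\subset\mathbb{C}\setminus\overline{\D}$, and c) gives generalized hyperbolicity with the splitting $M=\overline{\mathrm{span}}\{e_k:k\le 0\}$, $N=\overline{\mathrm{span}}\{e_k:k\ge 1\}$, after which Theorem \ref{genhyp} yields strong structural stability.

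The necessity direction, which is the actual substance of Bayart's theorem, is where your proposal has genuine gaps. In the perturbation construction you assert that any conjugacy $h$ must send $0$ to the fixed point $y_\epsilon$ you create; but the conjugacy equation only tells you that $h(0)$ is \emph{some} fixed point of $B_w+\phi$, so you need uniqueness of that fixed point, or a uniform lower bound on the norm of \emph{every} fixed point of the perturbed map (or the absence of fixed points altogether), and nothing in the sketch secures this. More seriously, you do not explain how, in each failure mode of a)--c), to manufacture a perturbation with \emph{simultaneously} small $\|\phi\|_\infty$ and small $\mathrm{Lip}(\phi)$ producing such an obstruction: on the long ``neutral'' blocks where the weight products stay of order one, a fixed point need not appear, and the obstruction has to come from bounded orbits, which is much more delicate. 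Your proposed ``cleaner alternative'' also reverses where the difficulty lies: orbits of $B_w+\phi$ are $\|\phi\|_\infty$-pseudotrajectories of $B_w$, but to deduce the shadowing property from strong structural stability you need the converse, namely to realize a given non-shadowable pseudotrajectory as a true orbit of a perturbation with small Lipschitz constant; naive Lipschitz extension of $x_n\mapsto x_{n+1}-B_wx_n$ blows up the Lipschitz constant when distinct pseudo-orbit points are close together, and this realization step is precisely the technical core of \cite{Bayart2021}, not a mechanical consequence of \cite{BM}. Finally, the appeal to Theorem \ref{SS} requires (positive) expansivity, which can fail when a)--c) all fail (e.g. $w_j\equiv 1$), so that route stalls as well. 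In short, the easy half is correct, but the hard half remains unproved in your plan.
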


\begin{cor}\cite[Corollary 1.3]{Bayart2021} \label{theoSSBW}
Let  $X=\ell^p({\mathbb Z})$ $(1 \leq p < \infty)$ or $X=c_0({\mathbb Z})$. Let $B_w$ be an invertible bilateral weighted backward shift.  Then $B_w$ is strongly structurally stable if and only if it has the shadowing property.
\end{cor}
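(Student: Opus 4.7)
The plan is to deduce the corollary by juxtaposing the characterization of strong structural stability just stated (Theorem \ref{theoSSBW}) with the known characterization of the shadowing property for bilateral weighted backward shifts from the literature. More precisely, by \cite[Theorem 18]{BM}, an invertible bilateral weighted backward shift $B_w$ on $\ell^p(\mathbb Z)$ or $c_0(\mathbb Z)$ has the shadowing property if and only if one of the conditions (a), (b), (c) listed in Theorem \ref{theoSSBW} holds; these are precisely the conditions under which $B_w$ is generalized hyperbolic, cf.\ \cite[Theorem 2.4.5]{DAnielloDarjiMaiuriello}.

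Given this, I would argue as follows. If $B_w$ is strongly structurally stable, then by Theorem \ref{theoSSBW} one of (a), (b), (c) holds, so by the characterization of shadowing recalled above $B_w$ has the shadowing property. Conversely, if $B_w$ has the shadowing property, then again by the cited characterization one of (a), (b), (c) holds, and Theorem \ref{theoSSBW} yields that $B_w$ is strongly structurally stable.

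One can also phrase the proof conceptually: for the class of invertible bilateral weighted backward shifts the three properties "generalized hyperbolicity", "shadowing", and "strong structural stability" are all equivalent to the same spectral/asymptotic conditions on the weight sequence. The implication generalized hyperbolicity $\Rightarrow$ shadowing (and generalized hyperbolicity $\Rightarrow$ strong structural stability, i.e.\ Theorem \ref{genhyp}) is already recorded in the diagram of Section 2; what makes the equivalence tight for weighted shifts is that the converse implications also hold in this restricted class, which is exactly the content of the two characterizations being combined.

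The only real obstacle is that there is essentially no new content to produce here: the work is entirely absorbed into the two characterization theorems being invoked. I would therefore keep the proof short, cite \cite[Theorem 18]{BM} and Theorem \ref{theoSSBW} explicitly, and note that both properties are equivalent to the common list of conditions (a), (b), (c). No quantitative estimates, perturbation arguments, or constructions of conjugating homeomorphisms are needed at this stage, since all of that work has already been carried out in \cite{Bayart2021} to establish Theorem \ref{theoSSBW}.
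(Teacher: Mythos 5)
Your deduction is correct and is exactly the intended route: the paper itself states this corollary only as a citation of \cite[Corollary 1.3]{Bayart2021}, which is obtained there precisely by matching the conditions (a), (b), (c) of the strong structural stability characterization with the characterization of shadowing for invertible bilateral weighted shifts in \cite[Theorem 18]{BM}. Nothing further is needed.
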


Using the previous result together with \cite[Theorem 1]{BM}, it follows that, for weighted shifts, hyperbolicity is equivalent to expansivity plus strong structural stability.

\subsection{Composition Operators}

\indent The setting, in which all the results of this note are proved, is fixed in the following definition.

\begin{defn}\label{compodyn}
A {\em composition dynamical system} is a quintuple $(X,{\mathcal B},\mu, f, T_f)$ where
\begin{enumerate}
     \item $(X,{\mathcal B},\mu)$ is a $\sigma$-finite measure space, 
    \item $f : X \to X$ is an injective {\em bimeasurable transformation},
i.e., $f(B) \in {\mathcal B}$ and $f^{-1}(B) \in {\mathcal B}$ for every $B \in {\mathcal B}$,
\item there is  $c > 0$ such that
\begin{equation}\label{condition}
   \mu(f^{-1}(B)) \leq c \mu(B) \ \textrm{ for every } B \in {\mathcal B},
   \tag{$\star$}
\end{equation}
\item $T_f: L^p(X) \rightarrow L^p(X) $, $1 \le p <\infty$, is the {\em composition operator} induced by $f$, i.e.,
\[T_f : \varphi \mapsto \varphi \circ f.\] 
\end{enumerate}
\end{defn}

\noindent It is well-known that (\ref{condition}) guarantees that $T_f$ is a bounded linear operator. Moreover, if $f$ is surjective and (\ref{condition}) holds with $f^{-1}$ replaced by $f$, then $T_{f^{-1}}$ is a well-defined bounded linear operator and $T^{-1}_f = T_{f^{-1}}$. For a detailed exposition on composition operators, see \cite{SM}. As it turns out from the literature, studying a dynamical property in the general context of composition operators is sometimes complicated. For instance, properties like shadowing, generalized hyperbolicity, chaos and frequent hypercyclicity are characterized, up to now, for composition operators with additional conditions: the dissipativity and the bounded distortion. 

\subsection{Dissipativity and Bounded Distortion}

\indent From now on, the measurable space $(X,{\mathcal B},\mu)$ is always assumed to be $\sigma$-finite. We recall that the transformation $f: X \rightarrow X$ is said to be {\em{non-singular}} if ``for each $B \in \mathcal B$, $\mu(f^{-1}(B))=0$ if and only if $\mu(B)=0$''.  Below, only the relevant definitions needed in the sequel are given. The reader interested in the topic may refer to \cite{AaronsonMSM1997,DAnielloDarjiMaiuriello,K} for more details and an exhaustive exposition on how they naturally arise from the Hopf Decomposition Theorem.

\begin{defn} Let $(X, {\mathcal B}, \mu)$ be a measure space and $f: X \rightarrow X$ be an invertible non-singular transformation. A measurable set $W \subset X$ is called a {\em  wandering set (for $f$) } if the sets $\{f^{-n}(W)\}_{n \in \Z}$ are pairwise disjoint.  
\end{defn}

\begin{defn} \label{dissipcompodyn}
Let $(X,{\mathcal B},\mu)$ be a measure space and $f:X \rightarrow X$ be invertible and non-singular. The quadruple $(X,{\mathcal B},\mu, f)$ is called 
\begin{itemize}
\item a {\em dissipative system generated by $W$,} if  $X = \dot {\cup} _{k \in \Z} f^k (W)$ for some $W \in \B$ with $0 < \mu (W) < \infty$ $($the symbol $\dot {\cup} $ denotes pairwise disjoint union$)$;
\item a {\em dissipative system, of bounded distortion, generated by $W$,} if there exists $K>0$ such that
\begin{equation*}
 \dfrac{1}{K} \mu(f^k(W))\mu(B) \leq \mu(f^k (B))\mu (W) \leq K \mu(f^k(W))\mu(B), \tag{$\Diamond$}\label{eq:conditionbd}
\end{equation*}
for all $k \in \mathbb Z$ and  $B \in {\mathcal B}(W)$, where  ${\mathcal B}(W) =\{ B \cap W, B \in {\mathcal B} \}.$
\end{itemize}
\end{defn}

\begin{defn}
A composition dynamical system $(X,{\mathcal B},\mu, f, T_f)$ is called
\begin{itemize}[series=condition]
\item  {\em dissipative composition dynamical system, generated by $W$,} if $(X,{\mathcal B},\mu, f)$ is a dissipative system generated by $W$;
\item {\em dissipative composition dynamical system, of bounded distortion, generated by $W$,} if $(X,{\mathcal B},\mu, f)$ is a dissipative system of bounded distortion, generated by $W$.
\end{itemize}
\end{defn}

In the sequel, the following result is needed.

\begin{prop} \cite[Proposition 2.6.5]{DAnielloDarjiMaiuriello}\label{diststar}
Let $(X,{\mathcal B},\mu, f)$ be a dissipative system of bounded distortion generated by $W$. Then, there is a constant $H>0$ such that, for all $B \in {\mathcal B}(W)$ with $\mu(B)> 0$ and for each $s, t \in \Z$,
\begin{equation*} 
  \dfrac{1}{H} \dfrac{\mu(f^{t+s}(W))}{\mu(f^s(W))} \leq \dfrac{\mu(f^{t+s} (B))}{\mu (f^s(B))} \leq H \dfrac{\mu(f^{t+s}(W))}{\mu(f^s(W))}. \tag{$\Diamond \Diamond$} \label{eq:generalbd}
\end{equation*} 
\end{prop}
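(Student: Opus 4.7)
The plan is to derive the two-sided ratio bound in \eqref{eq:generalbd} directly from the bounded distortion condition \eqref{eq:conditionbd} by applying it at two different indices and cancelling the reference quantity $\mu(B)$. The hypothesis \eqref{eq:conditionbd} has the form of a comparison between the pair $(\mu(f^k(B)), \mu(f^k(W)))$ and $(\mu(B), \mu(W))$, controlled by a single constant $K$; the conclusion \eqref{eq:generalbd} is a comparison between the pair $(\mu(f^{t+s}(B)), \mu(f^{t+s}(W)))$ and $(\mu(f^s(B)), \mu(f^s(W)))$. So the strategy is to use \eqref{eq:conditionbd} twice, once at $k=s$ and once at $k=t+s$, to eliminate the ``reference point'' $W$ in favor of $f^s(W)$.

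Concretely, I would first write, from \eqref{eq:conditionbd} at $k=s$, the estimate
\[
\frac{1}{K}\,\mu(f^s(W))\,\mu(B) \;\leq\; \mu(f^s(B))\,\mu(W) \;\leq\; K\,\mu(f^s(W))\,\mu(B),
\]
and rearrange it (using $\mu(B)>0$ and hence $\mu(f^s(B))>0$ by non-singularity) as
\[
\frac{1}{K}\,\frac{\mu(f^s(B))\,\mu(W)}{\mu(f^s(W))} \;\leq\; \mu(B) \;\leq\; K\,\frac{\mu(f^s(B))\,\mu(W)}{\mu(f^s(W))}.
\]
Then I would apply \eqref{eq:conditionbd} at $k=t+s$ to get
\[
\frac{1}{K}\,\mu(f^{t+s}(W))\,\mu(B) \;\leq\; \mu(f^{t+s}(B))\,\mu(W) \;\leq\; K\,\mu(f^{t+s}(W))\,\mu(B),
\]
and substitute the previous two-sided bound for $\mu(B)$ into these inequalities. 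The factor $\mu(W)$ cancels on both sides, and we obtain
\[
\frac{1}{K^2}\,\frac{\mu(f^{t+s}(W))}{\mu(f^s(W))} \;\leq\; \frac{\mu(f^{t+s}(B))}{\mu(f^s(B))} \;\leq\; K^2\,\frac{\mu(f^{t+s}(W))}{\mu(f^s(W))},
\]
which is the desired conclusion with $H := K^2$; note that $H$ is independent of $B$, $s$, and $t$, as required.

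There is essentially no main obstacle here: the only thing to check carefully is that the denominators in the manipulation are nonzero. Since $f$ is a bimeasurable injection and, in the dissipative setting, $\mu(f^k(W))\in(0,\infty)$ for all $k\in\mathbb{Z}$ (so that $X = \dot\cup_{k\in\mathbb{Z}} f^k(W)$ makes sense as a $\sigma$-finite decomposition), and because non-singularity of $f$ together with $\mu(B)>0$ forces $\mu(f^k(B))>0$ for every $k\in\mathbb{Z}$, every quantity written above is a strictly positive finite number. Hence the algebraic rearrangement is legitimate and the proof is complete.
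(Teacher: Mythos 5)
Your proof is correct and is the argument one expects: the paper itself does not prove this proposition (it is quoted from \cite[Proposition 2.6.5]{DAnielloDarjiMaiuriello}), and your two-fold application of condition $(\Diamond)$, at $k=s$ and at $k=t+s$, followed by cancelling $\mu(B)$ and $\mu(W)$, is precisely the standard derivation, giving the constant $H=K^{2}$ independent of $B$, $s$, $t$. One small caveat: positivity of $\mu(f^{k}(B))$ and $\mu(f^{k}(W))$ does follow from invertibility and non-singularity of $f$ as you say, but finiteness of $\mu(f^{k}(W))$ is not a consequence of the $\sigma$-finite decomposition $X=\dot{\cup}_{k\in\Z}f^{k}(W)$; it is rather implicitly assumed (as it must be for the ratios in $(\Diamond\Diamond)$, and already in $(\Diamond)$, to be meaningful), so it would be cleaner to state that assumption than to attribute it to $\sigma$-finiteness.
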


\section{Expansivity for Composition Operators}

From now on, ${\mathcal B}^+=\{ B \in {\mathcal B} : 0<\mu(B)<\infty\}.$ 

\begin{manualtheorem}{E} \label{PROPEX1}
Let $(X,{\mathcal B},\mu, f, T_f)$ be a composition dynamical system. The following statements hold.
\begin{enumerate}
\item $T_f$ is positively expansive if and only if for each $B \in \mathcal B$ with positive measure, \[ \sup_{n \in \Bbb N}\mu (f^{-n}(B))=\infty .\]
\item $T_f$ is expansive if and only if for each $B \in \mathcal B$ with positive measure,  \[ \sup_{n \in \Bbb Z}\mu (f^{-n}(B))= \infty .\]
\item $T_f$ is uniformly positively expansive if and only if  \[\lim_{n \rightarrow  \infty} \dfrac{ \mu (f^{-n}(B))}{\mu(B)}= \infty\] uniformly with respect to $B \in {\mathcal B}^+$. 
\item{$T_f$ is uniformly expansive if and only if ${\mathcal B}^+$ can be splitted as ${\mathcal B}^+={\mathcal B}^+_{\mathcal A} \cup {\mathcal B}^+_{\mathcal C}$ where 
\[\lim_{n \rightarrow  \infty} \dfrac{\mu (f^{n}(B))}{\mu(B)}= \infty, \ \ \text{ uniformly on } {\mathcal B}^+_{\mathcal A},\]  
\[\lim_{n \rightarrow  \infty} \dfrac{\mu (f^{-n}(B))}{\mu(B)}= \infty, \ \ \ \text{ uniformly on } {\mathcal B}^+_{\mathcal C}.\] }
\end{enumerate}
\end{manualtheorem}

\begin{proof}
The proof of (1) is obtained replacing $\Z$ by $\N$ in the proof of (2). \\

(2). Assume $T_f$ expansive, i.e., using statement $c)$ of \cite[Proposition 19]{BCDMP}, 
\[\sup_{n \in \Bbb Z} \Vert T_f^n \varphi \Vert_p = \infty, \ \text{ for each } \ \varphi \in L^p(X)\setminus \{0\}.\] Let $B \in {\mathcal B}$ with $\mu (B)>0,$ and take $\varphi=\chi _B.$ Note that, for every $n \in {\Bbb Z}$,
\[ \Vert T_f^n \varphi \Vert_p ^p=\Vert \varphi \circ f^n \Vert_p ^p = \int_X \vert \varphi \circ f^n \vert ^p d\mu= \int_X \vert \chi_B \vert ^p \circ f^n d\mu = \mu (f^{-n}(B)),\]
implying  $\displaystyle{\sup_{n \in \Bbb Z} }\  \mu (f^{-n}(B)) =  \infty$, and hence the thesis. \\
Conversely, assume $\displaystyle{\sup_{n \in \Bbb Z} } \ \mu (f^{-n}(B))=\infty$ for each $B \in {\mathcal B}$ with $\mu (B)>0 $.
Let $\varphi \in  L^p(X) \setminus \{0\}$. Then, there exists $\delta >0$ such that the set $B'=\{ x \in X \,:\, \vert \varphi (x) \vert >\delta \}$ has positive measure. For each $n \in \Z$, \[\Vert T_f^n \varphi \Vert_p^p=\int_X \vert \varphi \circ f^n \vert ^p d\mu \geq \int_{f^{-n}(B')} \vert \varphi \circ f^n \vert ^p d\mu \geq \delta ^p \mu (f^{-n}(B')),\]
implying $\displaystyle{\sup_{n \in \Bbb Z} }\Vert T_f^n \varphi \Vert_p = \infty .$ By the arbitrariness of $\varphi \in L^p(X)\setminus \{0\}$ and applying statement $c)$ of \cite[Proposition 19]{BCDMP}, it follows that $T_f$ is expansive. \\

(3). Assume $T_f$ uniformly positively expansive, i.e., by $b)$ of \cite[Proposition 19]{BCDMP}, 
\[ \lim_{n \rightarrow \infty} \Vert T_f^n \varphi \Vert_p = \infty, \text{ uniformly on } S_{L^p(X)}\]
where we recall that  $S_{L^p(X)}=\{ \varphi \in L^p(X) : \Vert \varphi \Vert _p =1\}.$
For each $B \in {\mathcal B}^+,$ take $\varphi=\frac{\chi _B}{\mu (B)^{\frac{1}{p}}}$ and note that, for each $n \in \N$,
\[ \Vert T_f^n \varphi \Vert_p ^p= \int_X \vert \varphi \circ f^n \vert ^p d\mu= \int_X \frac{{\vert \chi_B \vert}^p}{\mu (B)}  \circ f^n d\mu = \dfrac{\mu (f^{-n}(B))}{\mu (B)}.\] 
As $\varphi \in S_{L^p(X)}$, this implies  
\[\lim_{n \rightarrow  \infty} \dfrac{ \mu (f^{-n}(B))}{\mu(B)}= \infty, \text{ uniformly on the sets } B \in {\mathcal B}^+.\]
To prove the converse, according to statement $b)$ of \cite[Proposition 19]{BCDMP}, it will be shown that $\displaystyle{\lim_{n \rightarrow \infty} }\Vert T_f^n \varphi \Vert_p = \infty$ uniformly on $S_{L^p(X)}.$ It is enough to prove it for simple functions in $S_{L^p(X)}$ and then to use approximation. Let $M>0$. By hypothesis, in correspondence of $M$ there exists ${\overline n} \in \Bbb N $ such that for each $B \in {\mathcal B}^+,$
\[ \dfrac{\mu (f^{-n}(B))}{\mu(B)} > M, \ \ \  \text{ for each } n \geq \overline n. \vspace{-0.02cm} \] 
Let $\varphi \in S_{L^p(X)}$ be a simple function, that is $\varphi = \sum_{i=1}^{s} {\alpha}_{i} {\chi}_{B_{i}}$ where ${\alpha}_{i} \in {\Bbb C}\setminus \{0\}$, the $B_{i}$'s are pairwise disjoint measurable sets, and $\Vert \varphi \Vert _p^p = \sum_{i=1}^{s} \vert{\alpha _i}\vert^p {\mu (B_{i})} =1.$ Without loss of generality, assume $B_i \in {\mathcal B}^+$ for each $i \in \{1,...,s\}.$ For each $n \geq \overline n$, 
\begin{eqnarray*}
 \Vert T_f^n \varphi \Vert_p^p =  \int_X \vert \varphi \circ f^n \vert ^p d\mu &  = & \sum_{i=1}^{s} \int_X \vert \alpha _i \vert ^p \vert \chi_{B_i} \vert^p \circ f^n d\mu \\
 &=&  \sum_{i=1}^{s} \vert{{\alpha}_{i}}\vert ^p \mu (f^{-n}(B_{i}))\\
&>& \sum_{i=1}^{s} \vert{{\alpha}_{i}}\vert ^p M \mu (B_i)   \\
& = &  M \sum_{i=1}^{s} \vert{{\alpha}_{i}}\vert ^p \mu (B_{i})\\
& = &   M \Vert \varphi \Vert _p ^p \\ 
& = &  M.
\end{eqnarray*}
This shows that, for every $M>0$, there exists ${\overline n } \in \Bbb N$ such that, for each simple function $\varphi \in S_{L^p(X)},$
\[\Vert T^n_f \varphi \Vert _p^p >M, \ \ \  \text{ for each } n \geq \overline n,\] 
i.e.,
\[\lim_{n \rightarrow \infty} \Vert T_f^n \varphi \Vert_p = \infty.\]
Now, let $\varphi$ be an arbitrary element of $S_{L^p(X)}$. Write $\varphi = {\varphi}^{+} - {\varphi}^{-}$, where $ {\varphi}^{+}$ and $ {\varphi}^{-}$ are the positive and the negative part of $\varphi$, respectively. 
Let $\{{\varphi}^{+}_{k}\}_{k \in \N}$ and $\{{\varphi}^{-}_{k}\}_{k \in \N}$ be two non-decreasing sequences of simple functions, pointwise converging to ${\varphi}^{+}$ and ${\varphi}^{-}$, respectively. Then, the sequence  $\{{\varphi}_{k}\}_{k \in \N}$ defined as ${\varphi}_{k} = {\varphi}^{+}_{k} - {\varphi}^{-}_{k}$, pointwise converges to $\varphi$, and 
$\vert {\varphi}_{k} \vert  = \vert {\varphi}^{+}_{k} - {\varphi}^{-}_{k} \vert \leq 2 \vert \varphi \vert.$ 
By the Lebesgue Dominated Convergence Theorem, 
\[\lim_{k \rightarrow \infty} \Vert \varphi _k \Vert _p = \Vert \varphi  \Vert _p =1,\]
and hence, there exists $k_{0} \in \N$ such that, for each $k > k_{0}$,  
\[\Vert \varphi _k \Vert _p > \frac{1}{2}, \]
implying, for each $k > k_{0}$, $n \in \N$,
\[\left \Vert T^n_f \frac{\varphi _k}{\Vert \varphi _k \Vert _p} \right \Vert _p^p  < 2^p \Vert T^n_f \varphi _k \Vert _p^p = 2^p\int_X \vert \varphi _k \circ f^n \vert ^p d\mu \leq 2^{2p}  \int_X \vert \varphi \circ f^n \vert ^p d\mu=2^{2p} \Vert T^n_f \varphi \Vert_p^p.  \]
Note that, for each $k > k_{0}$,  $\frac{\varphi _k}{\Vert \varphi _k \Vert _p} \in  S_{L^p(X)}.$ By the first part of the proof and defining $S_k(\varphi)=\frac{\varphi _k}{\Vert \varphi _k \Vert _p},$  it follows that
\[\lim_{n \rightarrow \infty} \left \Vert T^n_f \left [S_k(\varphi) \right ] \right \Vert _p =  \infty, \text{ uniformly on } \varphi \in S_{L^p(X)} \text{ and } k>k_0,\]
 and hence, from the above computations,
\[ \lim_{n\rightarrow \infty} \Vert T_f^n \varphi \Vert_p^p=   \infty, \text{ uniformly on } S_{L^p(X)},\]
meaning that $T_f$ is uniformly positively expansive. \\

(4). Assume $T_f$ uniformly expansive. By assertion $d)$ of \cite[Proposition 19]{BCDMP}, $S_{L^p(X)}={\mathcal A} \cup {\mathcal C}$, where 
\[\lim_{n \rightarrow \infty} \Vert T_f^n \varphi \Vert_p =\infty \text{ uniformly on ${\mathcal A}$}\] and 
\[\lim_{n \rightarrow \infty} \Vert T_f^{-n}\varphi \Vert_p =\infty \text{ uniformly on ${\mathcal C}$.}\]
Clearly, this implies ${\mathcal B}^+={\mathcal B}^+_{\mathcal A} \cup {\mathcal B}^+_{\mathcal C}$, where 
\[ {\mathcal B}^+_{\mathcal A}= \left \{ B \in {\mathcal B}^+\,:\,\frac{\chi _B}{\mu (B)^{\frac{1}{p}}} \in {\mathcal A}\right  \} \ \  \text{ and } \ \ {\mathcal B}^+_{\mathcal C}= \left \{  B \in {\mathcal B}^+ \,:\,\frac{\chi _B}{\mu (B)^{\frac{1}{p}}} \in {\mathcal C}  \right \}\] are such that  
\[ \lim_{n \rightarrow  \infty} \dfrac{\mu (f^{n}(B))}{\mu (B)}= \infty \text{  uniformly on ${\mathcal B}^+_{\mathcal A}$}\] and  
\[ \lim_{n \rightarrow  \infty} \dfrac{\mu (f^{-n}(B))}{\mu (B)}= \infty \text{ uniformly on ${\mathcal B}^+_{\mathcal C}$},\]
i.e., the thesis holds. \\
To show the other direction in (4), it is sufficient to prove, using again $d)$ of \cite[Proposition 19]{BCDMP}, the existence of $\mathcal A$ and $\mathcal C$ such that $S_{L^p(X)}={\mathcal A} \cup {\mathcal C}$, with
\[\lim_{n \rightarrow \infty} \Vert T_f^n \varphi \Vert_p =\infty \text{ uniformly on ${\mathcal A}$}\] and 
\[\lim_{n \rightarrow \infty} \Vert T_f^{-n}\varphi \Vert_p =\infty \text{ uniformly on ${\mathcal C}$.}\]
Let $M>0$. By hypothesis, there exists $m \in \N$ such that, for all functions of type $\varphi=\dfrac{\chi _B}{\mu (B)^{\frac{1}{p}}}$, with $B \in {\mathcal B}^+$,
\[ \Vert T^n_f \varphi \Vert _p^p \geq M \hspace{0,3 cm} \text{or} \hspace{0,3 cm} \Vert T^{-n}_f \varphi \Vert _p^p \geq M  \hspace{0,3 cm}\forall n \geq m. \] 
Next, the above conclusion is proved for any simple function $\varphi \in S_{L^p(X)}$, and then an approximation by simple functions will provide that it holds also for an arbitrary $\varphi \in S_{L^p(X)}$.  Let $\tilde{S}_{L^p(X)}$ be the collection of simple functions in $S_{L^p(X)}$. First, we hence find two sets of simple functions in ${\tilde{S}}_{L^p(X)}$, denoted $\tilde{\mathcal A}$ and  $\tilde{\mathcal C}$, such that one has ${\tilde{S}}_{L^p(X)}=\tilde{\mathcal A} \cup \tilde{\mathcal C}$, with
\[\lim_{n \rightarrow \infty} \Vert T_f^n \varphi \Vert_p =\infty \text{ uniformly on $\tilde{\mathcal A}$,}\] and 
\[\lim_{n \rightarrow \infty} \Vert T_f^{-n}\varphi \Vert_p =\infty \text{ uniformly on $\tilde{\mathcal C}$.}\]
By hypothesis, in correspondence of $ M>0$, there exists ${\overline n} \in \Bbb N $ such that, for each $n \geq \overline n$,
\[ \dfrac{\mu (f^{n}(B))}{\mu (B)} >  M, \text{ for each $B \in {\mathcal B}^+_{\mathcal A}$,}\]
and 
\[ \dfrac{\mu (f^{-n}(B))}{\mu (B)} >  M, \text{ for each $B \in {\mathcal B}^+_{\mathcal C}$.} \]
Let $\varphi \in {\tilde{S}}_{L^p(X)}$, i.e., $\varphi = \sum_{i=1}^{s} {\alpha}_{i} {\chi}_{B_{i}}$, where $B_i$ are pairwise disjoint measurable sets and $\Vert \varphi \Vert _p^p = \sum_{i=1}^{s} \vert{\alpha _i}\vert^p {\mu (B_{i})} =1$. Without loss of generality, let $B_i \in {\mathcal B}^+$, for each $i \in \{1,...,s\}$. Write $\varphi  =  {\varphi}_{{\mathcal B}^+_{\mathcal A} }+   {\varphi}_{{\mathcal B}^+_{\mathcal C}}$, where 
\[{\varphi}_{{\mathcal B}^+_{\mathcal A}} = \sum_{i \in \{1, \ldots, s\}: B_{i} \in {\mathcal B}^+_{\mathcal A} } {\alpha}_{i} {\chi}_{B_{i}}\]
and 
\[{\varphi}_{{\mathcal B}^+_{\mathcal C}} = \sum_{i \in \{1, \ldots, s\}: B_{i} \in {{\mathcal B}^+_{\mathcal C}} } {\alpha}_{i} {\chi}_{B_{i}}\]
As 
\[\Vert \varphi \Vert _p^p = \sum_{i=1}^{s} \vert{\alpha _i}\vert^p {\mu (B_{i})} = \Vert {\varphi}_{{\mathcal B}^+_{\mathcal A}} \Vert _p^p + \Vert {\varphi}_{{\mathcal B}^+_{\mathcal C}} \Vert _p^p = 1,\]
at least one of these two things must happen 
\[ (a) \ \Vert {\varphi}_{{\mathcal B}^+_{\mathcal A}} \Vert _p^p \geq \frac{1}{2}; \ \ (b) \ \Vert {\varphi}_{{\mathcal B}^+_{\mathcal C}} \Vert _p^p \geq \frac{1}{2}. \]
In case $(a)$,  for each $n \geq \overline n$, 
\begin{eqnarray*}
 \Vert T_f^{-n} \varphi \Vert_p^p  =  \sum_{i=1}^{s} \vert{{\alpha}_{i}}\vert ^p   \mu (f^{n}(B_i))  &  \geq &  \sum_{i \in \{1, \ldots, s\}: B_{i} \in {{\mathcal B}^+_{\mathcal A}} } \vert{{\alpha}_{i}}\vert ^p  \mu (f^{n}(B_i)) \\
& > & M  \sum_{i \in \{1, \ldots, s\}: B_{i} \in {{\mathcal B}^+_{\mathcal A}} } \vert{{\alpha}_{i}}\vert ^p \mu (B_i) \\
& \geq &   M\dfrac{1}{2} . \\
\end{eqnarray*}
In case $(b)$, for each $n \geq \overline n$, 
\begin{eqnarray*}
 \Vert T_f^{n} \varphi \Vert_p^p  =  \sum_{i=1}^{s} \vert{{\alpha}_{i}}\vert ^p   \mu (f^{-n}(B_i)) &  \geq &  \sum_{i \in \{1, \ldots, s\}: B_{i} \in {{\mathcal B}^+_{\mathcal C}} } \vert{{\alpha}_{i}}\vert ^p  \mu (f^{-n}(B_i)) \\
& > & M  \sum_{i \in \{1, \ldots, s\}: B_{i} \in {{\mathcal B}^+_{\mathcal C}} } \vert{{\alpha}_{i}}\vert ^p \mu (B_i) \\
& \geq &   M\dfrac{1}{2} . \\
\end{eqnarray*}
From the above, it follows $\tilde{S}_{L^p(X)} = \tilde{{\mathcal A}} \cup \tilde{{\mathcal C}}$, where 
\[\tilde{{\mathcal A}} = \left \{\varphi \in \tilde{S}_{L^p(X)}: \Vert {\varphi}_{\mathcal{B^+_A}} \Vert _p^p \geq \frac{1}{2} \right \} \ \  \text{ and }  \ \ \tilde{{\mathcal{C}}} = \left \{\varphi \in \tilde{S}_{L^p(X)}: \Vert {\varphi}_{\mathcal{B^+_C}} \Vert _p^p \geq \frac{1}{2} \right \}.\]
Hence, the thesis is proved for $\tilde{S}_{L^p(X)}$, i.e., for simple maps in $S_{L^p(X)}$.   \\
Now, let $\varphi$ be an arbitrary element of $S_{L^p(X)}$. Proceeding as in part (3), write $\varphi = {\varphi}^{+} - {\varphi}^{-}$, where ${\varphi}^{+}$  and ${\varphi}^{-}$ are the positive and the negative part of $\varphi$, respectively. Let $\{{\varphi}^{+}_{k}\}_{k \in \N}$ and $\{{\varphi}^{-}_{k}\}_{k \in \N}$ be two non-decreasing sequences of simple functions, converging pointwise to ${\varphi}^{+}$ and ${\varphi}^{-}$, respectively. Then, the sequence  $\{{\varphi}_{k}\}_{k \in \N}$ defined as ${\varphi}_{k} = {\varphi}^{+}_{k} - {\varphi}^{-}_{k}$, pointwise converges to $\varphi$, and $\vert {\varphi}_{k} \vert  = \vert {\varphi}^{+}_{k} - {\varphi}^{-}_{k} \vert \leq 2 \vert \varphi \vert.$
By the Lebesgue Dominated Convergence Theorem, 
\[\lim_{k \rightarrow  \infty} \Vert \varphi _k \Vert _p = \Vert \varphi  \Vert _p =1,\]
and, hence, there exists $k_{0}$ such that, for each $k > k_{0}$, $\Vert \varphi _k \Vert _p > \frac{1}{2}$. 
Then, for each $k > k_{0}$, $n \in \N$,
\[\left \Vert T^n_f \frac{\varphi _k}{\Vert \varphi _k \Vert _p} \right \Vert _p^p  < 2^p \Vert T^n_f \varphi _k \Vert _p^p = 2^p\int_X \vert \varphi _k \circ f^n \vert ^p d\mu \leq 2^{2p}  \int_X \vert \varphi \circ f^n \vert ^p d\mu=2^{2p} \Vert T^n_f \varphi \Vert_p^p.\]
At least one of these two sets of indexes must be infinite
\[ I_1(\varphi)=\left \{k \in {\Bbb N}: \frac{\varphi _k}{\Vert \varphi _k \Vert_p} \in \tilde{{\mathcal A}} \right \}; \ \ \ \ \ \ \ I_2(\varphi) = \left \{k \in {\Bbb N}: \frac{\varphi _k}{\Vert \varphi _k \Vert_p} \in \tilde{{\mathcal C}}\right \}.\]
In case $I_1(\varphi)$ is infinite, there is an increasing sequence of integers $\{k_j\}$ such that one has $\left \{ \frac{\varphi_{k_{j}}}{\Vert \varphi_{k_{j}} \Vert_p}\right \} \subseteq \tilde{{\mathcal A}}$ and, by the first part of the proof,  for each $M>0$, there exists $n_0 \in \Bbb N$ such that
\[ \left \Vert T^{-n}_f \frac{\varphi_{k_{j}}}{\Vert \varphi_{k_{j}} \Vert_p} \right  \Vert _p^p > \frac{M}{2}, \text{ for each $n > n_0,$} \]
and then 
\[\Vert T^{-n}_f \varphi \Vert _p^p > \frac{1}{2^{2p}} \left  \Vert T^{-n}_f \frac{\varphi_{k_{j}}}{\Vert \varphi_{k_{j}} \Vert_p}  \right \Vert _p^p > \frac{M}{2^{2p+1}}, \text{ for each $n > n_0$}.\]
In case $I_2(\varphi)$ is infinite, there is an increasing sequence of integers $\{k_j\}$ such that one has $\left \{ \frac{\varphi_{k_{j}}}{\Vert \varphi_{k_{j}} \Vert_p}\right \} \subseteq \tilde{{\mathcal C}}$ and, by the first part of the proof,  for each $M>0$,  there exists $n_0 \in \Bbb N$ such that
\[\left \Vert T^{n}_f \frac{\varphi_{k_{j}}}{\Vert \varphi_{k_{j}} \Vert_p} \right \Vert _p^p > \frac{M}{2},  \text{ for each $n > n_0,$}\]
and then 
\[\Vert T^{n}_f \varphi \Vert _p^p > \frac{1}{2^{2p}} \left \Vert T^{n}_f \frac{\varphi_{k_{j}}}{\Vert \varphi_{k_{j}} \Vert_p} \right \Vert _p^p > \frac{M}{2^{2p+1}},  \text{ for each $n > n_0$}.\]
Letting \[{\mathcal A}= \left \{ \varphi \in S_{L^p(X)} : \# I_2(\varphi)=\infty \right \} \text{ and }\  {\mathcal C}=\left \{ \varphi \in S_{L^p(X)} :  \# I_1(\varphi)=\infty  \right \},\] 
it follows that $S_{L^p(X)}={\mathcal A} \cup \mathcal C$, with
\[\lim_{n \rightarrow \infty} \Vert T_f^n \varphi \Vert_p =\infty \text{ uniformly on ${\mathcal A}$,}\] and 
\[\lim_{n \rightarrow \infty} \Vert T_f^{-n}\varphi \Vert_p =\infty \text{ uniformly on ${\mathcal C}$,}\]
and hence, the thesis. 
\end{proof}

\begin{rmk}\label{rmkB} Note that the statement $(2)$ of the previous theorem remains true if one replaces ``for every $B \in \mathcal B$ with positive measure'' with ``for every $B \in \mathcal B^+$'', i.e., statement $(2)$ can be rewritten as `` $T_f$ is expansive if and only if for every $B \in \mathcal B^+$, $\sup_{n \in \Bbb Z}\mu (f^{-n}(B))= \infty$''.
\end{rmk}

The following conditions hold in the specific setting of dissipativity with bounded distortion, and they will be used in the sequel.

\begin{manualtheorem}{ED} \label{thmEdissipative}
Let $(X,{\mathcal B},\mu, f, T_f)$ be a dissipative composition dynamical system of bounded distortion, generated by $W$. Then, the following hold.
\begin{itemize}
\item[(1)]{$T_f$ is positively expansive if and only if  $\displaystyle{\sup_{n \in \mathbb N}\mu (f^{-n}(W))=\infty}$.}
\item[(2)]{$T_f$ is expansive if and only if  $\displaystyle{\sup_{n \in \mathbb Z}\mu (f^{n}(W))=\infty}$.}
\item[(3)]{$T_f$ is uniformly positively expansive if and only if $\displaystyle{\lim_{n \rightarrow \infty } \mu(f^{-n}(W))= \infty.}$}
\item[(4)]{$T_f$ is uniformly expansive if and only if one of the following conditions holds:
\begin{equation}
\lim_{n \rightarrow \infty} \inf _{k \in {\mathbb Z}} \left (\frac{\mu(f^{k+n}(W))}{\mu(f^{k}(W))}\right )  = \infty \tag*{$\mathcal{UE}1$}   
\end{equation}
\begin{equation}
 \lim_{n \rightarrow \infty} \inf_{k \in {\mathbb Z}} \left (\frac{\mu(f^{k-n}(W))}{\mu(f^{k}(W))} \right ) = \infty \tag*{$\mathcal{UE}2$} 
\end{equation}
\begin{equation}
 \lim_{n \rightarrow \infty} \inf _{k \in {\mathbb N}} \left (\frac{\mu(f^{k+n}(W))}{\mu(f^{k}(W))}\right  )  = \infty
 \   \ \& \ \ 
\lim_{n \rightarrow \infty} \inf_{k \in -{\mathbb N}_0} \left (\frac{\mu(f^{k-n}(W))}{\mu(f^{k}(W))}\right ) = \infty \tag*{$\mathcal{UE}3$}
\end{equation}
}
\end{itemize}
\end{manualtheorem}

\begin{proof}
The proof of (1) is skipped as it follows from the proof of (2), by replacing the set $\Z$ with $\N$. \\

$(2)$. By $(2)$ of Theorem \ref{PROPEX1} and Remark \ref{rmkB}, the following equivalence has to be proved
\[``\text{$\forall B \in {\mathcal B^+}, $}\sup_{n \in \mathbb Z}\mu (f^{-n}(B))= \infty  \Leftrightarrow \sup_{n \in \mathbb Z}\mu (f^{-n}(W))=\infty".\]
The implication $``\Rightarrow"$ is obvious. To show the other one, let $B \in {\mathcal B}^+$. Hence, there exists $n_{0} \in {\mathbb Z}$ such that $\mu(B \cap f^{n_{0}}(W)) >0$. Take  $A := f^{- n_{0}}(B \cap f^{n_{0}}(W)) = f^{-n_{0}}(B) \cap W$. Then, $A \in {\mathcal B}(W)$, $\mu(A)>0$ and, by applying the bounded distortion property, given by Condition ($\Diamond$) on page 6, it follows that
\[\sup_{n \in \mathbb Z}\mu (f^{-n}(A)) \geq \dfrac{1}{K} \dfrac{\mu(A)}{\mu(W)} \sup_{n \in \mathbb Z} \mu (f^{-n}(W))= \infty\]
and, hence,
\[ \sup_{n \in \mathbb Z}\mu (f^{-n}(B)) \geq \sup_{n \in \mathbb Z}\mu (f^{-n}(A))= \infty.\] 
Therefore, the thesis holds.\\

$(3).$ By $(3)$ of Theorem \ref{PROPEX1}, it has to be showed that 
\[\lim_{n \rightarrow  \infty} \dfrac{ \mu (f^{-n}(B))}{\mu(B)}= \infty \text{ uniform. w.r.t. $B \in {\mathcal B}^+$} \Leftrightarrow \lim_{n \rightarrow \infty } \mu(f^{-n}(W))= \infty\]
where we recall that ${\mathcal B}^+=\{B \in {\mathcal B} : 0 < \mu(B) < \infty \}.$
The implication $``\Rightarrow"$ is obvious. To see the converse, assume $\displaystyle{\lim_{n \rightarrow \infty } \mu(f^{-n}(W))= \infty}$ and fix $B \in {\mathcal B}^+$. As in the proof of (2), there exists $n_{0} \in {\mathbb Z}$ such that $B \cap f^{n_{0}}(W) \in {\mathcal B}^+$. Let $A = f^{- n_{0}}(B \cap f^{n_{0}}(W)) = f^{-n_{0}}(B) \cap W$. Then, $A \in {\mathcal B}(W)$, $0<\mu(A)<\infty$, and by using the bounded distortion property, given by Condition {($\Diamond$)} on page 6, one obtains
\[\dfrac{\mu (f^{-n}(A))}{\mu (A)} \geq \dfrac{1}{K} \dfrac{ \mu (f^{-n}(W))}{\mu (W)}, \ \ \text{ for each } n \in \mathbb Z,\]
where $K$ is the bounded distortion constant.
Hence, for every $n \in \mathbb Z$, 
 \[\mu(f^{-n-n_0}(B)) \geq \mu(f^{-n}(A))\geq \dfrac{1}{K} \dfrac{\mu (f^{-n}(W))}{\mu (W)} \mu(A)\]
 implying  
 \[\lim_{n \rightarrow \infty}\mu(f^{-n-n_0}(B)) \geq  \dfrac{1}{K \mu (W)}\lim_{n \rightarrow \infty} \mu (f^{-n}(W))=\infty .\]
By the arbitrariness of $B \in \mathcal B^+$ and writing $m=n+n_0$, it follows that
\[\lim_{m \rightarrow \infty} \dfrac{\mu (f^{-m}(B))}{\mu (B)}=\infty, \ \ \text{ uniformly w.r.t. } B \in {\mathcal B}^+, \]
showing, by (3) of Theorem \ref{PROPEX1}, that $T_f$ is uniformly positively expansive.\\

$(4).$ Assume $T_f$ uniformly expansive. By $(4)$ of Theorem \ref{PROPEX1}, ${\mathcal B}^+$ can be splitted as ${\mathcal B}^+={\mathcal B}^+_{\mathcal A} \cup {\mathcal B}^+_{\mathcal C}$, where 
\[\lim_{n \rightarrow  \infty} \dfrac{\mu (f^{n}(B))}{\mu (B)}= \infty \hspace{0,3 cm} \text{ uniformly on ${\mathcal B}^+_\mathcal A$} \] and  
\[\lim_{n \rightarrow  \infty} \dfrac{\mu (f^{-n}(B))}{\mu (B)}= \infty \hspace{0,3 cm} \text{ uniformly on ${\mathcal B}^+_\mathcal C$.}\] 
Let 
\[I = \left \{k \in \mathbb Z: f^{k}(W) \in {\mathcal B}^+_{\mathcal A}\right\} \text{ and } J = \left \{k \in \mathbb Z: f^{k}(W) \in {\mathcal B}^+_{\mathcal C} \right\}.\] 
Then,  
\[\lim_{n \rightarrow  \infty} \inf _{k \in I} \left (\frac{\mu(f^{k+n}(W))}{\mu(f^{k}(W))}\right )  = \infty\] 
and 
\[\lim_{n \rightarrow  \infty} \inf_{k \in { J}} \left (\frac{\mu(f^{k-n}(W))}{\mu(f^{k}(W))}\right ) = \infty.\]
Now, if $J=\emptyset$, then Condition ${\mathcal{UE}}1$ holds and, if $I=\emptyset$, then Condition ${\mathcal{UE}}2$ holds.
On the other hand, if $I$ and $J$ are both non-empty, then there exist $i$ and $j$ in 
$\mathbb Z$ such that 
\[[i, + \infty [  \cap {\mathbb Z} \subseteq I \text{ and } ]- \infty, j ] \cap {\mathbb Z} \subseteq J,\]
so that Condition ${\mathcal{UE}}3$ is satisfied. \\
Next, the reverse implication is proved, i.e., it is showed that each of Conditions~$\mathcal{UE}1$, $\mathcal{UE}2$ or $\mathcal{UE}3$ implies the uniform expansivity of $T_f$. To this aim, fix
 $B \in {\mathcal B}^+$. As the system is dissipative, one can write \[B = \dot{\bigcup_{k \in \mathbb Z}} B_{k} \ \ \text{ where } \ \ B_{k} = B \cap f^{k}(W). \]
 As the system is of bounded distortion, Condition $(\Diamond\Diamond)$ on page 6 holds, i.e., there exists a constant $H>0$ such that, for each $k$ and  $n \in \Z$,
\[\dfrac{1}{H} \frac{\mu(f^{k+n}(W))}{\mu(f^{k}(W))} {\mu(B_k)}\leq  {\mu(f^{n}(B_k))} \leq H \frac{\mu(f^{k+n}(W))}{\mu(f^{k}(W))}{\mu(B_k)}.\]
As $\mu(B)>0,$ then there exists $B_k$ with $\mu(B_k)>0$. Hence, for each $n \in \Z$,
\begin{align*} 
\mu(f^n(B)) &= \sum_{k \in \Z} \mu(f^n(B_k)) \\
&\geq \dfrac{1}{H} \sum_{k \in \Z} \frac{\mu(f^{k+n}(W))}{\mu(f^{k}(W))}  {\mu(B_k)} \\
&\geq  \dfrac{1}{H} {\mu(B)} \inf_{k \in {\Z} } \frac{\mu(f^{k+n}(W))}{\mu(f^{k}(W))}. \\
\end{align*}
This implies that, if one of Conditions~$\mathcal{UE}1$, $\mathcal{UE}2$ or $\mathcal{UE}3$ holds, then  
\[\lim_{n \rightarrow  \infty} \dfrac{\mu (f^{n}(B))}{\mu (B)}= \infty \hspace{0,3 cm} \text{ uniformly on ${\mathcal B}^+_{\mathcal A}$} \]
and 
\[\lim_{n \rightarrow  \infty} \dfrac{\mu (f^{-n}(B))}{\mu (B)}= \infty \hspace{0,3 cm} \text{ uniformly on ${\mathcal B}^+_{\mathcal C}$} \]
where ${\mathcal B}^+={\mathcal B}^+_{\mathcal A} \cup {\mathcal B}^+_{\mathcal C}$ with 
\[ {\mathcal B}^+_{\mathcal A} = {\mathcal B}^+ \cap (  \cup_{k \geq 0} f^{k}(W) )=  \{ B \cap (  \cup_{k \geq 0} f^{k}(W) ) :  B \in \mathcal B^+  \}\] and \[ {\mathcal B}^+_{\mathcal C}= {\mathcal B}^+ \cap \left (\cup_{k < 0} f^{k}(W)\right)=\left \{ B \cap (  \cup_{k < 0} f^{k}(W) ) :  B \in \mathcal B^+ \right \}.\]
\end{proof}

\section{Strong Structural Stability for Composition Operators}

\begin{prop} \label{lem2}
Let $(X,{\mathcal B},\mu, f, T_f)$ be a dissipative composition dynamical system of bounded distortion, generated by $W$.  If the following condition holds
\[ \uplim_{n \rightarrow \infty} \sup _{k \in \Bbb N_0}\left( \dfrac{\mu (f^{k}(W))}{\mu (f^{k+n}(W))} \right)^{\frac{1}{n}}<1 \ \ \& \ \ \lowlim_{n \rightarrow \infty} \inf _{k \in -\Bbb N_0}\left( \dfrac{\mu (f^{k-n}(W))}{\mu (f^{k}(W))} \right)^{\frac{1}{n}}>1,\]
then the operator $T_f$ is not structurally stable and, hence, not even strongly structurally stable.
\end{prop}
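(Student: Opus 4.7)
The plan is to apply the contrapositive of Theorem~\ref{SS}(b): it will suffice to show that $T_f$ is positively expansive but \emph{not} hyperbolic, since then $T_f$ cannot be structurally stable, and \emph{a fortiori} not strongly structurally stable (by the diagram in Section~2).

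Positive expansivity follows by setting $k=0$ in the second hypothesis: this gives $(\mu(f^{-n}(W))/\mu(W))^{1/n}\geq 1+\eta$ for some $\eta>0$ and all $n$ sufficiently large, hence $\mu(f^{-n}(W))\to\infty$, and Theorem~\ref{thmEdissipative}(1) yields positive expansivity of $T_f$.

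For non-hyperbolicity, I would argue via the spectrum. From $\Vert T_f^n\chi_W\Vert_p^p=\mu(f^{-n}(W))$ together with the $k=0$ case of the second hypothesis, one gets $\Vert T_f^n\Vert^{1/n}\geq (1+\eta)^{1/p}$, hence the spectral radius satisfies $r(T_f)\geq (1+\eta)^{1/p}>1$. Symmetrically, $\Vert T_f^{-n}\chi_W\Vert_p^p=\mu(f^n(W))$ together with the $k=0$ case of the first hypothesis yields $r(T_f^{-1})\geq \rho^{-1/p}>1$ for some $\rho<1$. Therefore $\sigma(T_f)$ contains points both of modulus greater than $1$ and of modulus less than $1$. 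The shift-like structure of dissipative composition operators of bounded distortion established in~\cite{DAnielloDarjiMaiuriello2} makes $T_f$ similar to a bilateral weighted shift on an $\ell^p$-type space, whose spectrum is classically a closed annulus, and in particular connected. A connected set meeting both $\{|z|<1\}$ and $\{|z|>1\}$ must meet ${\mathbb T}$, so $\sigma(T_f)\cap{\mathbb T}\neq\emptyset$, i.e., $T_f$ is not hyperbolic. The contrapositive of Theorem~\ref{SS}(b) then completes the proof.

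The main obstacle is justifying the connectedness of $\sigma(T_f)$: this is where the similarity with a bilateral weighted shift, via~\cite{DAnielloDarjiMaiuriello2}, enters crucially. A more self-contained alternative would be to rule out every generalized hyperbolic splitting $L^p(X)=M\oplus N$ compatible with conditions (A) and (B), which would demand a careful analysis of $T_f$-invariant subspaces aligned with the natural filtration $\{f^k(W)\}_{k\in\Z}$; the spectral-radius approach bypasses this by reducing the question to a classical fact about weighted shifts.
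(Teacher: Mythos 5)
Your overall frame coincides with the paper's: establish positive expansivity from the $k=0$ case of the second hypothesis via Theorem~\ref{thmEdissipative}(1), show $T_f$ is not hyperbolic, and conclude through the contrapositive of Theorem~\ref{SS}(b). The positive expansivity step and the two spectral-radius estimates $r(T_f)>1$, $r(T_f^{-1})>1$ are correct as computed. The gap is exactly where you flag it, and it is not a repairable technicality in the form you propose: the conclusion ``$\sigma(T_f)$ is connected, hence meets $\mathbb{T}$'' is not supported by the cited results. The shift-like structure from \cite{DAnielloDarjiMaiuriello2} identifies $T_f$ with an \emph{operator-weighted} bilateral shift on $\ell^p(\Z,L^p(W))$ (the weights are multiplication-type operators whose norms are only \emph{comparable}, up to the distortion constant, to the scalars $\mu(f^{k-1}(W))/\mu(f^k(W))$); it does not provide a linear conjugacy to a scalar weighted shift on $\ell^p(\Z)$, and the classical annulus/connectedness theorem you invoke is a theorem about scalar bilateral weighted shifts. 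For operator-weighted shifts connectedness of the spectrum can fail outright (e.g.\ a direct sum of two scalar shifts with constant weights $2$ and $1/2$), and ruling such behaviour out under bounded distortion is essentially equivalent to the hyperbolicity analysis you are trying to avoid. This matters because, without connectedness, the two spectral-radius bounds are perfectly compatible with hyperbolicity: a hyperbolic operator typically has spectrum on both sides of $\mathbb{T}$, so knowing $\sigma(T_f)$ meets $\{|z|<1\}$ and $\{|z|>1\}$ proves nothing about $\sigma(T_f)\cap\mathbb{T}$. Thus the non-hyperbolicity step is unproved, and your fallback (ruling out all generalized hyperbolic splittings by hand) is only sketched.

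The paper closes this step differently, and you could adopt its route with your existing first half unchanged: from the two hypotheses it derives, by passing from $\sup_{k\in\N_0}$ (resp.\ $\inf_{k\in-\N_0}$) to the corresponding quantities over all of $\Z$ and over the relevant half-lines, that \emph{none} of Conditions $\mathcal{HC}$, $\mathcal{HD}$, $\mathcal{GH}$ of \cite[Corollary SC]{DAnielloDarjiMaiuriello} holds; by that characterization $T_f$ fails the shadowing property, hence is not hyperbolic (hyperbolicity implies shadowing), and Theorem~\ref{SS}(b) finishes the argument. In other words, non-hyperbolicity is obtained through the shadowing characterization available in the dissipative bounded-distortion setting, not through spectral geometry; replacing your connectedness claim with that verification would make the proof complete.
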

\begin{proof}
By the second inequality in the hypothesis, there exists $n_0 \in \N$ and $l>1$ such that 
\[ \inf _{k \in -\Bbb N_0}\left( \dfrac{\mu (f^{k-n}(W))}{\mu (f^{k}(W))} \right)^{\frac{1}{n}}\geq l >1, \text{ for each } n \geq n_0. \tag{$\heartsuit$}\]
This implies, taking $k=0$,
\[ \dfrac{\mu(f^{-n}(W))}{\mu(W)} \geq \inf _{k \in -\Bbb N_0}\dfrac{\mu (f^{k}(W))}{\mu (f^{k-n}(W))} \geq l^n, \text{ for each } n \geq n_0 \]
and hence $\sup_{n \in \N} \mu(f^{-n}(W)) \geq \sup_{n \in [n_0, \infty)} \mu(f^{-n}(W))= \infty$, i.e., by point (1) of Theorem \ref{thmEdissipative}, $T_f$ is positively expansive.
Moreover, from $(\heartsuit)$ it follows that
\[ \sup_{k \in - \Bbb N_0} \left (\dfrac{\mu (f^{k-n}(W))}{\mu (f^{k}(W))}\right )^{\frac{1}{n}} \geq \inf _{k \in -\Bbb N_0}\left( \dfrac{\mu (f^{k-n}(W))}{\mu (f^{k}(W))} \right)^{\frac{1}{n}} \geq l >1, \text{ for each } n \geq n_0,\]
implying 
\[ \uplim_{n \rightarrow \infty} \sup_{k \in  \Bbb Z} \left (\dfrac{\mu (f^{k-n}(W))}{\mu (f^{k}(W))}\right )^{\frac{1}{n}} \geq \uplim_{n \rightarrow \infty} \sup_{k \in - \Bbb N_0} \left (\dfrac{\mu (f^{k-n}(W))}{\mu (f^{k}(W))}\right )^{\frac{1}{n}} >1. \tag{$\heartsuit \heartsuit$}\]

Now, the first inequality in the hypothesis gives that there exists $m_0 \in \N$ and $0<t<1$ such that
\[ \sup_{k \in \Bbb N_0}\left( \dfrac{\mu (f^{k}(W))}{\mu (f^{k+n}(W))} \right)^{\frac{1}{n}} \leq t <1, \text{ for each } n \geq m_0. \tag{$\circ$}\]
This implies that, for each $n \geq m_0$,
\begin{eqnarray*}
 \inf_{k \in \Z} \left ( \dfrac{\mu (f^{k}(W))}{\mu (f^{k+n}(W))} \right)^{\frac{1}{n}}  \leq \inf_{k \in \N _0}  \left ( \dfrac{\mu (f^{k}(W))}{\mu (f^{k+n}(W))} \right)^{\frac{1}{n}}  \leq \sup_{k \in \N_0}  \left (\dfrac{\mu (f^{k}(W))}{\mu (f^{k+n}(W))} \right)^{\frac{1}{n}} \leq t <1
\end{eqnarray*}
implying, 
\[ \lowlim_{n \rightarrow \infty}\inf_{k \in \Z} \left ( \dfrac{\mu (f^{k}(W))}{\mu (f^{k+n}(W))}  \right)^{\frac{1}{n}} \leq  \lowlim_{n \rightarrow \infty} \inf_{k \in \N _0} \left (  \dfrac{\mu (f^{k}(W))}{\mu (f^{k+n}(W))}  \right)^{\frac{1}{n}}  <1. \tag{$\circ \circ$}\]
From $(\circ \circ)$ and $(\heartsuit \heartsuit)$ it follows that none of Conditions $\mathcal{HC, HD, GH}$ of \cite[Corollary SC]{DAnielloDarjiMaiuriello} holds, precisely: $(\circ \circ)$  implies that the second half of Condition $\mathcal{GH}$ (and hence Condition $\mathcal{HD}$) does not hold, while $(\heartsuit \heartsuit)$ implies that the first half of Condition $\mathcal{GH}$ and Condition $\mathcal{HC}$ do not hold. This means that $T_f$ does not have the shadowing property and, therefore, it is not hyperbolic. As $T_f$ is positively expansive but not hyperbolic, then, using $(b)$ of Theorem \ref{SS}, it follows that $T_f$ is not structurally stable and, therefore, not even strongly structurally stable.
\end{proof}

\begin{manualtheorem}{SC1} \label{SC1}
Let $(X,{\mathcal B},\mu, f, T_f)$ be a dissipative composition dynamical system of bounded distortion, generated by $W$. If one of the following conditions holds: 
\begin{equation*}\label{hc}
\uplim_{n \rightarrow \infty} \sup _{k \in {\mathbb Z}} {\left (\frac{\mu(f^{k}(W))}{\mu(f^{k+n}(W))}\right )}^{\frac{1}{n}} <1 \tag*{$\mathcal{HC}$}   
\end{equation*}
\begin{equation*}\label{hd}
 \lowlim_{n \rightarrow \infty} \inf _{k \in {\mathbb Z}} {\left (\frac{\mu(f^{k}(W))}{\mu(f^{k+n}(W))} \right)}^{\frac{1}{n}} >1 \tag*{$\mathcal{HD}$} 
\end{equation*}
\begin{equation*}{\label{gh}
  \uplim_{n \rightarrow  \infty} \sup _{k \in -{\mathbb N}_{0}} { \left (\frac{\mu(f^{k-n}(W))}{\mu(f^{k}(W))} \right )}^{\frac{1}{n}} < 1 
  \ \  \& \ \ 
\lowlim_{n \rightarrow  \infty} \inf_{k \in {\mathbb N_{0}} } {\left (\frac{\mu(f^{k}(W))}{\mu(f^{k+n}(W))}\right )}^{\frac{1}{n}} >1 \tag*{$\mathcal{GH}$}}
\end{equation*}
then the operator $T_f$ is strongly structurally stable.
\end{manualtheorem}
\begin{proof}
Assume that one of Conditions $\mathcal{HC}$, $\mathcal{HD}$ or $\mathcal{GH}$ holds. Note that, using \cite[Theorem SS]{DAnielloDarjiMaiuriello}, if Condition $\mathcal{HC}$, $\mathcal{HD}$ holds, then $T_f$ is hyperbolic and, hence, generalized hyperbolic; if Condition $\mathcal{GH}$ holds, then $T_f$ is generalized hyperbolic. The thesis follows by applying Theorem \ref{genhyp}.\\
\end{proof}

\begin{manualcor}{SC2} \label{SC2}
Let $(X,{\mathcal B},\mu, f, T_f)$ be a dissipative composition dynamical system of bounded distortion, generated by $W$.  If the operator $T_f$ has the shadowing property or, equivalently, is generalized hyperbolic, then it is strongly structurally stable.
\end{manualcor}
\begin{proof}
It simply follows comparing Theorem \ref{SC1} with \cite[Corollary SC]{DAnielloDarjiMaiuriello}.\\
\end{proof}

\begin{manualtheorem}{W} \label{W}
Let $(X,{\mathcal B},\mu, f, T_f)$ be a dissipative composition dynamical system of bounded distortion, generated by $W$. Consider the weighted backward shift $B_w$ on $\ell^p(\Z)$ with weights
\[w_{k} =  \left( \frac{\mu(f^{k-1}(W))}{\mu(f^{k}(W))}\right)^{\frac{1}{p}} .\] If $B_w$ is strongly structurally stable, then so is $T_f$.
\end{manualtheorem}
\begin{proof}
If $B_w$ is strongly structurally stable, then one of conditions $a), b), c)$ of Theorem \ref{theoSSBW} holds. In particular, $B_w$ has the shadowing property. Using the fact that  $w_{k} =  \left( \frac{\mu(f^{k-1}(W))}{\mu(f^{k}(W))}\right)^{\frac{1}{p}}, $
and using \cite[Theorem M]{DAnielloDarjiMaiuriello2}, it follows that $T_f$ has the shadowing property and hence one of conditions $\mathcal{HC}$, $\mathcal{HD}$ or $\mathcal{GH}$ of Theorem \ref{SC1} holds. This implies the thesis.\\
\end{proof}

\begin{manualtheorem}{C} \label{C}
Let $(X,{\mathcal B},\mu, f, T_f)$ be a dissipative composition dynamical system of bounded distortion, generated by $W$. Assume that $\sup_{n \in \N}\mu(f^{-n}(W))= \infty$. Then $T_f$ is strongly structurally stable if and only if it has the shadowing property. 
\end{manualtheorem}
\begin{proof}
$(\Leftarrow)$ If $T_f$ has the shadowing property, then by Corollary \ref{SC2} it follows that $T_f$ is strongly structurally stable.\\

$(\Rightarrow)$ As $T_f$ is strongly structurally stable, then it is structurally stable. By hypothesis, $\sup_{n \in \N}\mu(f^{-n}(W))= \infty$, meaning, by Theorem \ref{thmEdissipative}, that $T_f$ is positively expansive. Hence, Theorem \ref{SS} implies $T_f$ hyperbolic and then the thesis.
\end{proof}

{\bf{Open Problem:}} Does the equivalence hold in Theorem \ref{SC1} (and hence in Corollary SC2 and Theorem W)?\\

\bibliography{bibliothesis}

\begin{thebibliography}{10}

\bibitem{AaronsonMSM1997}
{\sc J.~Aaronson}, {\em An introduction to infinite ergodic theory}, vol.~50 of
  Mathematical Surveys and Monographs, American Mathematical Society,
  Providence, RI, 1997.

\bibitem{AbdenurDiaz2007}
{\sc F.~Abdenur and L.~J. D\'{\i}az}, {\em Pseudo-orbit shadowing in the
  {$C^1$} topology}, Discrete Contin. Dyn. Syst., 17 (2007), pp.~223--245.

\bibitem{AP}
{\sc A.~Andronov and L.~Pontrjagin}, {\em Structurally stable systems}, Dokl.
  Akad. Nauk SSSR, 14 (1937), pp.~247--250.

\bibitem{Bayart2021}
{\sc F.~Bayart}, {\em Two problems on weighted shifts in linear dynamics},
  Proc. Amer. Math. Soc., 149 (2021), pp.~5255--5266.

\bibitem{BADP}
{\sc F.~Bayart, U.~B. Darji, and B.~Pires}, {\em Topological transitivity and
  mixing of composition operators}, J. Math. Anal. Appl., 465 (2018),
  pp.~125--139.

\bibitem{BCDMP}
{\sc N.~C. Bernardes, Jr., P.~R. Cirilo, U.~B. Darji, A.~Messaoudi, and E.~R.
  Pujals}, {\em Expansivity and shadowing in linear dynamics}, J. Math. Anal.
  Appl., 461 (2018), pp.~796--816.

\bibitem{BDP}
{\sc N.~C. Bernardes, Jr., U.~B. Darji, and B.~Pires}, {\em Li-{Y}orke chaos
  for composition operators on {$L^p$}-spaces}, Monatsh. Math., 191 (2020),
  pp.~13--35.

\bibitem{BernardesMessaoudi}
{\sc N.~C. Bernardes, Jr. and A.~Messaoudi}, {\em A generalized
  {G}robman-{H}artman theorem}, Proc. Amer. Math. Soc., 148 (2020),
  pp.~4351--4360.

\bibitem{BM}
\leavevmode\vrule height 2pt depth -1.6pt width 23pt, {\em Shadowing and
  structural stability for operators}, Ergodic Theory Dynam. Systems, 41
  (2021), pp.~961--980.

\bibitem{CGP}
{\sc P.~Cirilo, B.~Gollobit, and E.~Pujals}, {\em Dynamics of generalized
  hyperbolic linear operators}, Adv. Math., 387 (2021), pp.~Paper No. 107830,
  37.

\bibitem{DAnielloDarjiMaiuriello}
{\sc E.~D'Aniello, U.~B. Darji, and M.~Maiuriello}, {\em Generalized
  hyperbolicity and shadowing in {$L^p$} spaces}, J. Differential Equations,
  298 (2021), pp.~68--94.

\bibitem{DAnielloDarjiMaiuriello2}
\leavevmode\vrule height 2pt depth -1.6pt width 23pt, {\em Shifts like
  operators on {$L^p$} spaces}, arXiv: 2107.12103,  (2021).

\bibitem{Darjipires}
{\sc U.~B. Darji and B.~Pires}, {\em Chaos and frequent hypercyclicity for
  composition operators}, Proc. Edinb. Math. Soc. (2), 64 (2021), pp.~513--531.

\bibitem{E}
{\sc M.~Eisenberg}, {\em Expansive automorphisms of finite-dimensional vector
  spaces}, Fund. Math., 59 (1966), pp.~307--312.

\bibitem{EH}
{\sc M.~Eisenberg and J.~H. Hedlund}, {\em Expansive automorphisms of {B}anach
  spaces}, Pacific J. Math., 34 (1970), pp.~647--656.

\bibitem{K}
{\sc U.~Krengel}, {\em Ergodic theorems}, vol.~6 of De Gruyter Studies in
  Mathematics, Walter de Gruyter \& Co., Berlin, 1985.
\newblock With a supplement by Antoine Brunel.

\bibitem{O}
{\sc J.~Ombach}, {\em The shadowing lemma in the linear case}, Univ. Iagel.
  Acta Math.,  (1994), pp.~69--74.

\bibitem{PilyuginTikhomirov2010}
{\sc S.~Y. Pilyugin and S.~Tikhomirov}, {\em Lipschitz shadowing implies
  structural stability}, Nonlinearity, 23 (2010), pp.~2509--2515.

\bibitem{PU}
{\sc C.~C. Pugh}, {\em On a theorem of {P}. {H}artman}, Amer. J. Math., 91
  (1969), pp.~363--367.

\bibitem{R}
{\sc J.~W. Robbin}, {\em Topological conjugacy and structural stability for
  discrete dynamical systems}, Bull. Amer. Math. Soc., 78 (1972), pp.~923--952.

\bibitem{Rolewicz}
{\sc S.~Rolewicz}, {\em On orbits of elements}, Studia Math., 32 (1969),
  pp.~17--22.

\bibitem{SM}
{\sc R.~K. Singh and J.~S. Manhas}, {\em Composition operators on function
  spaces}, vol.~179 of North-Holland Mathematics Studies, North-Holland
  Publishing Co., Amsterdam, 1993.

\end{thebibliography}
\bibliographystyle{siam}

\Addresses

\end{document}